\newtheorem{thm}{Theorem}
\newtheorem{prop}{Proposition}
\newtheorem{lem}{Lemma}
\newtheorem{cor}{Corollary}
\def\zt{\zeta}
\def\si{\sigma}
\def\Zt{\mathfrak Z}
\def\D{\mathcal D}
\def\Sym{\operatorname{Sym}}
\def\Iso{\operatorname{Iso}}
\def\Q{\mathbf Q}
\def\R{\mathbf R}
\def\F{\mathcal F}
\newcommand{\pD}[2]{\frac{\partial{#1}}{\partial{#2}}}
\begin{document}
\begin{titlepage}
\begin{flushleft} DESY 12-091 \hfill {\tt arXiv:1205.7051 [math.NT]}
\end{flushleft}
\vspace{0.8 cm}
\begin{center}
\Large
{\bf On Multiple Zeta Values of Even Arguments}\\
\vspace{1.0 cm}
\large
{Michael E. Hoffman\footnote{Supported by a grant from the
German Academic Exchange Service (DAAD) during the preparation
of this paper.  The author also thanks DESY for providing 
facilities and financial support for travel, and the referee
for urging him to include multiple zeta-star values (which
led to Corollary \ref{Fst}).}\\
\vspace{0.5 cm}
\normalsize
{\it Dept. of Mathematics, U. S. Naval Academy\\
Annapolis, MD 21402 USA\\
and\\
Deutsches Elektronen-Synchrotron DESY\\
Platanenalle 6, D-15738 Zeuthen, Germany}\\
\vspace{0.5 cm}
{\tt meh@usna.edu}\\
\vspace{0.5 cm}
March 17, 2016\\
\small Keywords: multiple zeta values, symmetric functions, Bernoulli numbers\\
\small MR Classifications: Primary 11M32; Secondary 05E05, 11B68}
\vfill
\end{center}
%%ABSTRACT%%%%%%%%%%%%%%%%%%%%%%%%%%%%%%%%%%%%%%%%%%%%%%%%%%
\begin{abstract}
For $k\le n$, let $E(2n,k)$ be the sum of all multiple zeta
values with even arguments whose weight is $2n$ and whose
depth is $k$.  Of course $E(2n,1)$ is the value $\zt(2n)$
of the Riemann zeta function at $2n$, and it is well known
that $E(2n,2)=\frac34\zt(2n)$.  Recently Z. Shen and T. Cai 
gave formulas for $E(2n,3)$ and $E(2n,4)$ in terms of $\zt(2n)$
and $\zt(2)\zt(2n-2)$.  We give two formulas for $E(2n,k)$,
both valid for arbitrary $k\le n$, one of which generalizes 
the Shen-Cai results; by comparing the two we obtain a 
Bernoulli-number identity.  We also give explicit generating 
functions for the numbers $E(2n,k)$ and for the analogous
numbers $E^{\star}(2n,k)$ defined using multiple zeta-star
values of even arguments.
\end{abstract}
\vfill
\end{titlepage}
%%ARTICLE TEXT BEGINS HERE %%%%%%%%%%%%%%%%%%%%%%%%%%%%%%%%%
\section{Introduction and Statement of Results}
For positive integers $i_1,\dots,i_k$ with $i_1>1$, we define
the multiple zeta value $\zt(i_1,\dots,i_k)$ by
\begin{equation}
\label{mzv}
\zt(i_1,\dots,i_k)=\sum_{n_1>\dots>n_k\ge 1}\frac1{n_1^{i_1}\cdots n_k^{i_k}} .
\end{equation}
The multiple zeta value (\ref{mzv}) is said to have weight $i_1+\dots+i_k$
and depth $k$.
Many remarkable identities have been proved about these numbers,
but in this note we will concentrate on the case
where the $i_j$ are even integers.  Let $E(2n,k)$ be the
sum of all the multiple zeta values of even-integer arguments
having weight $2n$ and depth $k$, i.e.,
\[
E(2n,k)=\sum_{\substack{\text{$i_1,\dots, i_k$ even}\\ i_1+\dots+i_k=2n}}\zt(i_1,\dots,i_k) .
\]
Of course 
\begin{equation}
\label{euler}
E(2n,1)=\zt(2n)=\frac{(-1)^{n-1}B_{2n}(2\pi)^{2n}}{2(2n)!},
\end{equation}
where $B_{2n}$ is the $2n$th Bernoulli number,
by the classical formula of Euler.  Euler also studied double zeta
values (i.e., multiple zeta values of depth 2) and in his paper \cite{E}
gave two identities which read
\begin{align*}
\sum_{i=2}^{2n-1}(-1)^i\zt(i,2n-i)&=\frac12\zt(2n) \\
\sum_{i=2}^{2n-1}\zt(i,2n-i)&=\zt(2n)
\end{align*}
in modern notation.  From these it follows that
\[
E(2n,2)=\frac34\zt(2n),
\]
though Gangl, Kaneko and Zagier \cite{GKZ} seem to be the first
to have pointed it out in print.  
Recently Shen and Cai \cite{SC} proved the formulas
\begin{align}
\label{trip}
E(2n,3)&=\frac58\zt(2n)-\frac14\zt(2)\zt(2n-2),\ n\ge 3 \\
\label{quad}
E(2n,4)&=\frac{35}{64}\zt(2n)-\frac5{16}\zt(2)\zt(2n-2),\ n\ge 4 .
\end{align}
Identity (\ref{trip}) was also proved by Machide \cite{M}
using a different method.
\par
This begs the question whether there is a general formula of
this type for $E(2n,k)$.  
The pattern
\[
\frac34,\quad \frac34\cdot\frac56=\frac58,\quad
\frac34\cdot\frac56\cdot\frac78=\frac{35}{64}
\]
of the leading coefficients makes one curious.
In fact, the general result is as follows.
\begin{thm}
\label{ztcon}
For $k\le n$, 
\begin{multline*}
E(2n,k)=\frac1{2^{2(k-1)}}\binom{2k-1}{k}\zt(2n)\\
-\sum_{j=1}^{\lfloor \frac{k-1}2\rfloor}
\frac1{2^{2k-3}(2j+1)B_{2j}}\binom{2k-2j-1}{k}\zt(2j)\zt(2n-2j) .
\end{multline*}
\end{thm}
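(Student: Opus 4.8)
The plan is to obtain a closed form for the two‑variable generating function $\sum_{n,k}E(2n,k)t^{2n}y^k$ and then read off the coefficients. Expanding $\dfrac{t^2}{m^2-t^2}=\sum_{a\ge1}(t/m)^{2a}$ and relabelling the summation indices of a multiple zeta value so that the largest one comes first, one sees that $E(2n,k)$ is the coefficient of $t^{2n}$ in the $k$th elementary symmetric function $e_k$ of the numbers $\dfrac{t^2}{m^2-t^2}$, $m\ge1$. Summing over $k$ and applying Euler's product $\dfrac{\sin\pi z}{\pi z}=\prod_{m\ge1}\bigl(1-\tfrac{z^2}{m^2}\bigr)$ twice,
\[
\sum_{k\ge0}\Bigl(\sum_{n\ge k}E(2n,k)t^{2n}\Bigr)y^k
=\prod_{m\ge1}\Bigl(1+\frac{yt^2}{m^2-t^2}\Bigr)
=\prod_{m\ge1}\frac{m^2-t^2(1-y)}{m^2-t^2}
=\frac{\sin\!\bigl(\pi t\sqrt{1-y}\bigr)}{\sqrt{1-y}\,\sin\pi t},
\]
which is a genuine power series in $t,y$ since it is even in $\sqrt{1-y}$.

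\textbf{Extracting $y^k$.} Writing $\dfrac{\sin(\pi t\sqrt{1-y})}{\sqrt{1-y}}=\sum_{j\ge0}\dfrac{(-1)^j(\pi t)^{2j+1}}{(2j+1)!}(1-y)^j$ and picking off $y^k$ gives
\[
\sum_{n\ge k}E(2n,k)t^{2n}=\frac{\psi_k(\pi t)}{\sin\pi t},\qquad
\psi_k(u):=\sum_{j\ge k}(-1)^{j-k}\binom{j}{k}\frac{u^{2j+1}}{(2j+1)!}.
\]
A coefficient comparison using the identities $m\binom{m-1}{k-1}=k\binom mk$ and $(k+1)\binom m{k+1}=(m-k)\binom mk$ shows that $\psi_0=\sin u$, $\psi_1=\tfrac12(\sin u-u\cos u)$, and, for $k\ge1$,
\[
\psi_{k+1}(u)=\frac{2k+1}{2(k+1)}\,\psi_k(u)-\frac{u^2}{4k(k+1)}\,\psi_{k-1}(u)
\]
(one may also recognise $\psi_k(u)=\tfrac{u^{k+1}}{2^kk!}j_k(u)$ with $j_k$ the spherical Bessel function, but this is not needed). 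Since $\psi_0,\psi_1$ have the shape $A(u)\sin u+B(u)\cos u$ with $A,B\in\mathbf{Q}[u]$ and the recursion preserves this shape, $\psi_k(u)=A_k(u)\sin u+B_k(u)\cos u$ with $A_k,B_k\in\mathbf{Q}[u]$ obeying the \emph{same} recursion and $A_0=1,\ B_0=0,\ A_1=\tfrac12,\ B_1=-\tfrac u2$. An immediate induction gives $\deg A_k\le k$, $\deg B_k\le k$, and $B_k$ odd.

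\textbf{Reading off $E(2n,k)$.} Using $\pi t\cot\pi t=1-2\sum_{m\ge1}\zt(2m)t^{2m}$,
\[
\sum_{n\ge k}E(2n,k)t^{2n}
=A_k(\pi t)+\frac{B_k(\pi t)}{\pi t}-\frac{2B_k(\pi t)}{\pi t}\sum_{m\ge1}\zt(2m)t^{2m}.
\]
The first two terms form a polynomial in $t$ of degree $\le k$ (here $B_k(0)=0$ is used), so for $n\ge k$ they contribute nothing to the coefficient of $t^{2n}$. Writing $-\dfrac{2B_k(\pi t)}{\pi t}=\sum_{j\ge0}q_{k,j}t^{2j}$, a polynomial in $t^2$ of degree $\le\lfloor\frac{k-1}2\rfloor$ because $B_k$ is odd of degree $\le k$, we get $E(2n,k)=\sum_{j=0}^{\lfloor(k-1)/2\rfloor}q_{k,j}\,\zt(2n-2j)$ for $n\ge k$. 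Feeding $B_k(u)=-\tfrac u2\sum_j\pi^{-2j}q_{k,j}u^{2j}$ into the three‑term recursion turns it into
\[
\pi^{-2j}q_{k+1,j}=\frac{2k+1}{2(k+1)}\pi^{-2j}q_{k,j}-\frac{1}{4k(k+1)}\pi^{-2(j-1)}q_{k-1,j-1},\qquad q_{1,0}=1,
\]
whose $j=0$ slice is the telescoping product $q_{k,0}=\prod_{i=1}^{k-1}\tfrac{2i+1}{2(i+1)}=2^{-2(k-1)}\binom{2k-1}{k}$; the full solution is
\[
q_{k,j}=\frac{(-1)^j2^{2j}\pi^{2j}}{2^{2(k-1)}(2j+1)!}\binom{2k-2j-1}{k}.
\]
The $j=0$ term of $E(2n,k)$ is then $2^{-2(k-1)}\binom{2k-1}{k}\zt(2n)$, while for $j\ge1$ one substitutes $\pi^{2j}=\dfrac{(-1)^{j-1}2(2j)!\,\zt(2j)}{2^{2j}B_{2j}}$ (from (\ref{euler})) to rewrite $q_{k,j}\zt(2n-2j)$ as exactly $-\dfrac{1}{2^{2k-3}(2j+1)B_{2j}}\binom{2k-2j-1}{k}\zt(2j)\zt(2n-2j)$, which is the claimed formula. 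The only step beyond bookkeeping is the determination of $q_{k,j}$: one must guess the displayed closed form and verify it against the three‑term recursion, which reduces to a single (routine but slightly fiddly) binomial identity; this is where the coefficients $\binom{2k-2j-1}{k}$ and the factors $2^{2j}$ are actually pinned down.
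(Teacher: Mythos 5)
Your proof is correct, and its overall architecture coincides with the paper's: obtain the closed form $F(t,s)=\sin(\pi\sqrt{t}\sqrt{1-s})/(\sqrt{1-s}\sin\pi\sqrt{t})$, isolate the coefficient of $s^k$, write it as a polynomial multiple of $\pi\sqrt{t}\cot\pi\sqrt{t}$ plus a polynomial, discard the terms of degree too low to matter when $n\ge k$, and pin down the polynomial by guessing a closed form for its coefficients and verifying it against a recurrence in $k$. You differ in two places, both legitimate. First, you derive the generating function directly from Euler's product for the sine by expanding $\prod_m\bigl(1+yt^2/(m^2-t^2)\bigr)$; the paper instead works in the ring of symmetric functions (its Lemma 1, $\F(t,s)=E((s-1)t)H(t)$) and applies a specialization homomorphism, taking $\zt(2,\dots,2)=\pi^{2n}/(2n+1)!$ as an input. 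Your route is more self-contained for Theorem 1 alone, though it forgoes the symmetric-function identities the paper reuses to prove Theorem 3. Second, where the paper represents $G_k$ via the $k$th derivative of $\sin\pi\sqrt{t}/\pi\sqrt{t}$ and obtains a coupled first-order recurrence for the pair $(P_k,Q_k)$, you observe that $\psi_k$ satisfies an uncoupled three-term (spherical-Bessel-type) recurrence, which then need only be solved for the cotangent coefficient $B_k$; I checked that your recurrence, your initial data, and your closed form for $q_{k,j}$ (which agrees with the paper's equation (\ref{altcon})) are all consistent. The one step you leave as ``routine but fiddly'' --- verifying that the displayed $q_{k,j}$ satisfies the three-term recursion --- is a genuine binomial identity that does hold, and it is the exact analogue of the paper's verification that its explicit $P_k,Q_k$ satisfy its recurrence, so nothing essential is missing.
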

The next two cases after (\ref{quad}) are
\begin{align*}
E(2n,5) &= \frac{63}{128}\zt(2n)-\frac{21}{64}\zt(2)\zt(2n-2)
+\frac{3}{64}\zt(4)\zt(2n-4)\\
E(2n,6) &= \frac{231}{512}\zt(2n)-\frac{21}{64}\zt(2)\zt(2n-2)
+\frac{21}{256}\zt(4)\zt(2n-4).
\end{align*}
\par
We prove Theorem \ref{ztcon} in \S3 below, using the generating function
\[
F(t,s)=1+\sum_{n\ge k\ge 1} E(2n,k)t^ns^k .
\]
In \S2 we establish the following explicit formula.
\begin{thm}
\label{gfun}
\[
F(t,s)=\frac{\sin(\pi\sqrt{1-s}\sqrt{t})}{\sqrt{1-s}\sin(\pi\sqrt{t})} .
\]
\end{thm}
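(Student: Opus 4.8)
The plan is to recognize $F(t,s)$ as an infinite product and then invoke Euler's classical product expansion of $\sin$.

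\textbf{Step 1: a nested-sum form of the coefficients.} Starting from the series definition~\eqref{mzv} of a multiple zeta value, I would write, for each $k$,
\[
\sum_{n\ge k}E(2n,k)t^n=\sum_{a_1,\dots,a_k\ge 1}t^{a_1+\dots+a_k}\sum_{n_1>\dots>n_k\ge1}\frac1{n_1^{2a_1}\cdots n_k^{2a_k}},
\]
and then interchange the two sums and carry out the geometric sum $\sum_{a\ge1}(t/n^2)^a=t/(n^2-t)$ in each coordinate, obtaining
\[
\sum_{n\ge k}E(2n,k)t^n=\sum_{n_1>\dots>n_k\ge1}\ \prod_{j=1}^k\frac{t}{n^2_j-t}.
\]

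\textbf{Step 2: the depth sum is a product.} Multiplying by $s^k$, summing over $k\ge1$, and restoring the $k=0$ term $1$, the right-hand side is precisely the expansion (over finite subsets $\{n_1>\dots>n_k\}$ of the positive integers) of an infinite product:
\[
F(t,s)=1+\sum_{k\ge1}s^k\!\!\sum_{n_1>\dots>n_k\ge1}\ \prod_{j=1}^k\frac{t}{n_j^2-t}=\prod_{n\ge1}\left(1+\frac{st}{n^2-t}\right).
\]
I would justify this either formally in $\mathbf{Q}[[s,t]]$ — each factor lies in $1+st\,\mathbf{Q}[[t]]$, and the coefficient of $s^kt^m$ is a finite sum of (convergent) even multiple zeta values — or analytically, noting that for $|t|<1$ one has $t/(n^2-t)=O(1/n^2)$, so the product converges absolutely and the rearrangement into subsets is legitimate.

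\textbf{Step 3: apply Euler's sine product.} Combining the two parts of each factor,
\[
F(t,s)=\prod_{n\ge1}\frac{n^2-(1-s)t}{n^2-t}
=\frac{\displaystyle\prod_{n\ge1}\bigl(1-(1-s)t/n^2\bigr)}{\displaystyle\prod_{n\ge1}\bigl(1-t/n^2\bigr)},
\]
the factors $\prod_n n^2$ having cancelled. By $\prod_{n\ge1}(1-z^2/n^2)=\sin(\pi z)/(\pi z)$ with $z=\sqrt{1-s}\sqrt t$ in the numerator and $z=\sqrt t$ in the denominator, the powers of $\pi$ and a single $\sqrt t$ cancel, leaving
\[
F(t,s)=\frac{\sin(\pi\sqrt{1-s}\sqrt t)}{\sqrt{1-s}\,\sin(\pi\sqrt t)},
\]
which is the asserted formula. (As a check, $\sin(\pi\sqrt{1-s}\sqrt t)/\sqrt{1-s}$ is even in $\sqrt{1-s}$, hence a genuine power series in $s$, consistent with $F\in 1+t s\,\mathbf{Q}[[s,t]]$; and $s\to1$ gives $F(t,1)=\pi\sqrt t/\sin(\pi\sqrt t)$.)

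\textbf{Main obstacle.} The computation is short, so the only point requiring care is Step~2: the identification of the sum over strictly decreasing tuples $n_1>\dots>n_k$ with the subset expansion of $\prod_{n\ge1}\bigl(1+st/(n^2-t)\bigr)$, and the accompanying convergence/rearrangement justification (or the parallel statement that everything is a well-defined element of a suitable power series ring). Everything else is the elementary algebra of Step~3 plus the standard sine product.
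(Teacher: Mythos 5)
Your proof is correct, and it arrives at the same infinite product that drives the paper's argument, but by a more direct and elementary route. The paper first lifts everything to the ring of symmetric functions: it introduces $N_{n,k}$ (the sum of the monomial symmetric functions of degree $n$ and length $k$, which the homomorphism $\Zt$ sends to $E(2n,k)$), proves the formal factorization $\F(t,s)=E((s-1)t)H(t)$ in Lemma \ref{infprod}, and then applies $\Zt$ (which sends $x_i$ to $1/i^2$), evaluating $\Zt(E(t))$ and $\Zt(H(t))$ separately by means of the formula $\zt(2,\dots,2)=\pi^{2n}/(2n+1)!$ quoted from an earlier paper. Your Steps 1 and 2 amount to carrying out the specialization $x_i\mapsto 1/i^2$ of that same factorization by hand — your factor $1+st/(n^2-t)$ is exactly the image of the factor $1+stx_i+st^2x_i^2+\cdots$ appearing in the proof of Lemma \ref{infprod} — and your Step 3 replaces the citation for $\zt(2,\dots,2)$ with a single direct appeal to Euler's sine product. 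What you gain is self-containedness: Theorem \ref{gfun} falls out of the series definition plus the sine product, the only delicate point being the rearrangement you correctly flag in Step 2, which is adequately handled either coefficient-by-coefficient (the coefficient of $s^kt^n$ in the product is the finite, convergent sum $E(2n,k)$) or by absolute convergence for $|t|<1$. What the paper's detour buys is the symmetric-function object $\F(t,s)$ itself, which is reused to derive the differential equation of Proposition \ref{pde}, the recurrence of Proposition \ref{sfi}, and ultimately Lemma \ref{Nexp} and Theorem \ref{explicit}; your argument, staying at the level of real numbers throughout, does not produce those intermediate structures.
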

Our proof uses symmetric functions.
We define a homomorphism $\Zt:\Sym\to\R$, where $\Sym$ is the algebra of 
symmetric functions, and a family $N_{n,k}\in\Sym$ such that
$\Zt$ sends $N_{n,k}$ to $E(2n,k)$.  We then obtain a formula
for the generating functions
\[
\F(t,s)=1+\sum_{n\ge k\ge 1} N_{n,k}t^n s^k\in \Sym[[t,s]]
\]
and apply $\Zt$ to get Theorem \ref{gfun}.
\par
Along with the multiple zeta values there are the multiple zeta-star values
\[
\zt^{\star}(i_1,\dots,i_k)=\sum_{n_1\ge\dots\ge n_k\ge 1}\frac1{n_1^{i_1}\cdots n_k^{i_k}},
\]
where the strict inequalities in equation (\ref{mzv}) are replaced by
$\ge$.
These coincide with multiple zeta values for depth 1, and for greater
depths they are simply sums of multiple zeta values, e.g.,
\[
\zt^{\star}(6,2,4)=\zt(6,2,4)+\zt(8,4)+\zt(6,6)+\zt(12) .
\]
Conversely, multiple zeta values are sums of multiple zeta-star values
with signs alternating by depth, e.g.,
\[
\zt(6,2,4)=\zt^{\star}(6,2,4)-\zt^{\star}(8,4)-\zt^{\star}(6,6)+\zt^{\star}(12) .
\]
For $k\le n$ we can define $E^{\star}(2n,k)$ as the sum of all multiple
zeta-star values with even arguments having weight $2n$ and depth $k$.
The generating function
\[
F^{\star}(t,s)=1+\sum_{n\ge k\ge 1}E^{\star}(2n,k)t^ns^k
\]
turns out to have a remarkably simple relation to $F(t,s)$, as we
show in \S2.
\begin{cor}
\label{Fst}
The generating functions $F(t,s)$ and $F^{\star}(t,s)$ are related by
$F^{\star}(t,s)=F(t,-s)^{-1}$.
\end{cor}
\par
From the form of $\F(t,s)$ we show that it satisfies
a partial differential equation (Proposition \ref{pde} below),
which is equivalent to a recurrence for the $N_{n,k}$.  
From the latter we obtain a formula for the $N_{n,k}$ in terms of 
complete and elementary symmetric functions, to which $\Zt$
can be applied to give the following alternative formula for $E(2n,k)$.
\begin{thm} For $k\le n$,
\label{explicit}
\[
E(2n,k)=\frac{(-1)^{n-k-1}\pi^{2n}}{(2n+1)!}\sum_{i=0}^{n-k}
\binom{n-i}{k}\binom{2n+1}{2i}2(2^{2i-1}-1)B_{2i} .
\]
\end{thm}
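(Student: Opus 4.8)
Since Theorem~\ref{gfun} already gives $F(t,s)$ in closed form and $E(2n,k)$ is by definition the coefficient of $t^ns^k$ in $F(t,s)$, the plan is simply to extract that coefficient by hand; the Bernoulli numbers and the factor $2(2^{2i-1}-1)$ should emerge from the Laurent expansion of the cosecant. First I would neutralize the square roots by the substitution $u=\pi\sqrt t$, rewriting Theorem~\ref{gfun} as
\[
F(t,s)=\frac{\sin(u\sqrt{1-s})}{\sqrt{1-s}\,\sin u}
=\Bigl(\frac{u}{\sin u}\Bigr)\cdot\frac1u\cdot\frac{\sin(u\sqrt{1-s})}{\sqrt{1-s}}.
\]
The point of this grouping is that $\frac{u}{\sin u}$ and $\frac1u\cdot\frac{\sin(u\sqrt{1-s})}{\sqrt{1-s}}$ are each \emph{even} power series in $u$, so their product is a genuine power series in $t=u^2/\pi^2$ and the half-integer powers of $t$ never actually appear.

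Next I would expand the two factors separately. From the Taylor series of sine, $\frac{\sin(u\sqrt{1-s})}{\sqrt{1-s}}=\sum_{m\ge0}\frac{(-1)^m(1-s)^m}{(2m+1)!}u^{2m+1}$, and the binomial theorem gives $[s^k](1-s)^m=(-1)^k\binom mk$, so
\[
[s^k]\,\frac{\sin(u\sqrt{1-s})}{\sqrt{1-s}}=(-1)^k\sum_{m\ge k}\frac{(-1)^m\binom mk}{(2m+1)!}\,u^{2m+1}.
\]
For the remaining factor I would use the classical expansion
\[
\frac{u}{\sin u}=\sum_{j\ge0}\frac{(-1)^{j+1}2(2^{2j-1}-1)B_{2j}}{(2j)!}\,u^{2j},
\]
whose $j=0$ term equals $1$ since $2(2^{-1}-1)B_0=-1$; this is exactly the source of the arithmetic factor in Theorem~\ref{explicit}. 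Multiplying the two series, collecting the coefficient of $u^{2n}$ (so that $m+j=n$), and multiplying by $\pi^{2n}$ to convert $u^{2n}$ into $t^n$, I obtain
\[
E(2n,k)=(-1)^{n-k-1}\pi^{2n}\sum_{m=k}^{n}\frac{\binom mk}{(2m+1)!}\cdot\frac{2(2^{2(n-m)-1}-1)B_{2(n-m)}}{(2(n-m))!},
\]
the sign coming from $(-1)^k(-1)^m(-1)^{(n-m)+1}=(-1)^{n-k-1}$.

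Finally I would reindex by $i=n-m$, so that $i$ runs from $0$ to $n-k$, and clear denominators: since $(2(n-i)+1)+2i=2n+1$, one has $\frac1{(2(n-i)+1)!\,(2i)!}=\frac1{(2n+1)!}\binom{2n+1}{2i}$, and pulling $\frac1{(2n+1)!}$ out of the sum produces exactly the formula in Theorem~\ref{explicit}. The only steps demanding any care are the bookkeeping of the half-integer powers (disposed of by the substitution $u=\pi\sqrt t$ and the even-series observation above) and getting the cosecant coefficients precisely right; both can be checked against small cases — e.g.\ $k=n$ collapses the sum to its $i=0$ term and returns $E(2n,n)=\zt(2,2,\dots,2)=\pi^{2n}/(2n+1)!$, while $k=1$ reproduces Euler's evaluation~(\ref{euler}). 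Alternatively one could stay at the level of $\Sym$, deriving the recurrence for the $N_{n,k}$ from Proposition~\ref{pde} and solving it before applying $\Zt$; the direct extraction above is the quicker route now that Theorem~\ref{gfun} is in hand.
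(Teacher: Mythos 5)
Your proof is correct, but it reaches Theorem \ref{explicit} by a genuinely more direct route than the paper. The paper stays at the level of symmetric functions: from the factorization $\F(t,s)=E((s-1)t)H(t)$ (Lemma \ref{infprod}) it derives a partial differential equation (Proposition \ref{pde}), hence the recurrence of Proposition \ref{sfi}, and then proves $N_{n,k}=\sum_{i=0}^{n-k}\binom{n-i}{k}(-1)^{n-k-i}h_ie_{n-i}$ (Lemma \ref{Nexp}) by induction; Theorem \ref{explicit} follows by applying $\Zt$ with $\Zt(e_j)=\pi^{2j}/(2j+1)!$ and $\Zt(h_i)=2(2^{2i-1}-1)(-1)^{i-1}B_{2i}\pi^{2i}/(2i)!$. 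You instead extract $[t^ns^k]$ directly from the closed form of Theorem \ref{gfun}; your factorization $\frac{u}{\sin u}\cdot\frac{\sin(u\sqrt{1-s})}{u\sqrt{1-s}}$ is precisely the image under $\Zt$ of $H(t)\cdot E((s-1)t)$, with the cosecant series playing the role of the $\Zt(h_i)$ and the sine series together with $[s^k](1-s)^m=(-1)^k\binom{m}{k}$ playing the role of the $\Zt(e_j)$ and of Lemma \ref{Nexp}'s binomial coefficients. I checked the details: the evenness observation disposes of the half-integer powers, the sign $(-1)^k(-1)^m(-1)^{j-1}=(-1)^{n-k-1}$ for $m+j=n$ is right, and the recombination $\frac{1}{(2n-2i+1)!\,(2i)!}=\frac{1}{(2n+1)!}\binom{2n+1}{2i}$ after setting $i=n-m$ gives exactly the stated formula (and your spot checks at $k=n$ and $k=1$ are correct). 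Your argument in fact shows that the paper's inductive detour is avoidable: expanding $E((s-1)t)H(t)$ coefficientwise in $t$ and $s$ yields Lemma \ref{Nexp} at once, without Proposition \ref{sfi}. What the paper's longer route buys is the $\Zt$-independent identity for $N_{n,k}$ in $\Sym$ and the recurrence of Proposition \ref{sfi}, both of independent interest; what yours buys is brevity once Theorem \ref{gfun} is in hand.
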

Note that the sum given by Theorem \ref{explicit} has $n-k+1$ terms,
while that given by Theorem \ref{ztcon} has $\lfloor\frac{k-1}2\rfloor+1$
terms.
Yet another explicit formula for $E(2n,k)$ can be obtained by setting
$d=1$ in Theorem 7.1 of Komori, Matsumoto and Tsumura \cite{KMT}.
That formula expresses $E(2n,k)$ as a sum over partitions of $k$,
and it is not immediately clear how it relates to our two formulas.
\par
Comparison of Theorems \ref{ztcon} and \ref{explicit} establishes
the following Bernoulli-number identity.
\begin{thm}
\label{bercon}
For $k\le n$,
\begin{multline*}
\sum_{i=0}^{\lfloor \frac{k-1}2 \rfloor}\binom{2k-2i-1}{k}
\binom{2n+1}{2i+1}B_{2n-2i} =\\
(-1)^k2^{2k-2n}
\sum_{i=0}^{n-k}\binom{n-i}{k}\binom{2n+1}{2i}(2^{2i-1}-1)B_{2i} .
\end{multline*}
\end{thm}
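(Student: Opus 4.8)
The plan is to obtain Theorem~\ref{bercon} by equating the two expressions for $E(2n,k)$ provided by Theorems~\ref{ztcon} and~\ref{explicit}, once the zeta values occurring in Theorem~\ref{ztcon} have been rewritten in terms of Bernoulli numbers via Euler's evaluation~(\ref{euler}). Both source theorems hold for all $k\le n$, so no range issues arise.

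First I would collapse Theorem~\ref{ztcon} into a single sum. Substituting $\zt(2n)=(-1)^{n-1}B_{2n}(2\pi)^{2n}/(2(2n)!)$ and the corresponding formulas for $\zt(2j)$ and $\zt(2n-2j)$ gives $\zt(2j)\zt(2n-2j)=(-1)^{n}B_{2j}B_{2n-2j}(2\pi)^{2n}/(4\,(2j)!\,(2n-2j)!)$, and the factor $B_{2j}$ here cancels the $1/B_{2j}$ in Theorem~\ref{ztcon} --- which is why the final identity involves only integers and Bernoulli numbers. Collecting the powers of $2$ to a common $2^{2k-1}$ and applying the elementary identity $\frac{1}{2j+1}\binom{2n}{2j}=\frac{1}{2n+1}\binom{2n+1}{2j+1}$ (valid also when $j=0$), one sees that the leading term $\frac{1}{2^{2(k-1)}}\binom{2k-1}{k}\zt(2n)$ is precisely the $j=0$ summand of the residual sum, so that
\[
E(2n,k)=\frac{(-1)^{n-1}(2\pi)^{2n}}{2^{2k-1}(2n+1)!}\sum_{j=0}^{\lfloor\frac{k-1}2\rfloor}\binom{2k-2j-1}{k}\binom{2n+1}{2j+1}B_{2n-2j}.
\]
Equating this with the formula of Theorem~\ref{explicit} and cancelling the common factor $\pi^{2n}/(2n+1)!$, the numerical constants that survive are $(-1)^{n-1}2^{2n-2k+1}$ on the left and $2(-1)^{n-k-1}$ on the right; dividing the resulting equation through by $2(-1)^{n-1}2^{2n-2k}$ turns it into $A=(-1)^k2^{2k-2n}C$, where $A$ and $C$ denote the two sums in the statement of Theorem~\ref{bercon}. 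That is exactly the claimed identity.

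The whole argument is bookkeeping, and I do not expect a genuine obstacle; the points that demand attention are the sign arithmetic --- carrying $(-1)^{n-1}$, $(-1)^n$ and $(-1)^{n-k-1}$ correctly through the substitutions and using $-(-1)^n=(-1)^{n-1}$ --- together with the degenerate small cases. For $k=1,2$ the sum in Theorem~\ref{ztcon} is empty, but incorporating the $\zt(2n)$-term as the $j=0$ summand is still legitimate because $\binom{2n}{0}=1=\frac{1}{2n+1}\binom{2n+1}{1}$; and on the right-hand side of Theorem~\ref{bercon} the index $i=0$ merely contributes the harmless $B_0=1$. Beyond~(\ref{euler}) and the one-line binomial rearrangement no further input is required, so at worst one risks a slip in the exponents of $2$.
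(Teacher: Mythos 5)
Your proposal is correct and follows essentially the same route as the paper: substitute Euler's evaluation (\ref{euler}) into Theorem \ref{ztcon}, absorb the $\zt(2n)$ term as the $j=0$ summand (the paper does this via the intermediate form (\ref{altcon})), and compare the resulting single sum with Theorem \ref{explicit}. The sign and power-of-two bookkeeping you describe checks out, so nothing further is needed.
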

\par\noindent
It is interesting to contrast this result with the Gessel-Viennot
identity (see \cite[Theorem 4.2]{CS}) valid on the complementary
range:
\begin{equation}
\label{GV}
\sum_{i=0}^{\lfloor \frac{k-1}2 \rfloor}\binom{2k-2i-1}{k}
\binom{2n+1}{2i+1}B_{2n-2i}=
\frac{2n+1}{2}
\binom{2k-2n}{k} ,\quad k > n .
\end{equation}
Note that the right-hand side of equation (\ref{GV}) is zero unless $k\ge 2n$.
\section{Symmetric Functions}
We think of $\Sym$ as the subring of $\Q[[x_1,x_2,\dots]]$ consisting
of those formal power series of bounded degree that are invariant
under permutations of the $x_i$.
A useful reference is the first chapter of Macdonald \cite{Mac}.
We denote the elementary, complete,
and power-sum symmetric functions of degree $i$ by $e_i$, $h_i$, and $p_i$
respectively.  They have associated generating functions
\begin{align*}
E(t)&=\sum_{j=0}^\infty e_jt^j=\prod_{i=1}^\infty (1+tx_i)\\
H(t)&=\sum_{j=0}^\infty h_jt^j=\prod_{i=1}^\infty \frac1{1-tx_i} = E(-t)^{-1}\\
P(t)&=\sum_{j=1}^\infty p_jt^{j-1}=\sum_{i=1}^\infty \frac{x_i}{1-tx_i}=
\frac{H'(t)}{H(t)} .
\end{align*}
\par
As explained in \cite{H2} and in greater detail in \cite{H3}, 
there is a homomorphism
$\zt:\Sym^0\to\R$, where $\Sym^0$ is the subalgebra of $\Sym$ generated
by $p_2,p_3,p_4,\dots$, such that $\zt(p_i)$ is the value $\zt(i)$ of
the Riemann zeta function at $i$, for $i\ge 2$ (in \cite{H2,H3} this
homomorphism is extended to all of $\Sym$, but we do not need the
extension here).  
Let $\D:\Sym\to\Sym$ be the degree-doubling map that sends $x_i$ to 
$x_i^2$.  Then $\D(\Sym)\subset\Sym^0$, so the composition 
$\Zt=\zt\D$ is defined on all of $\Sym$.  
(Alternatively, we can simply think of $\Zt$ as sending 
$x_i$ to $1/i^2$:  see \cite[Ch. I, \S2, ex. 21]{Mac}.)
Note that $\Zt(p_i)=\zt(2i)\in\R$.  
Further, $\Zt$ sends the monomial
symmetric function $m_{i_1,\dots,i_k}$ to the symmetrized sum
of multiple zeta values
\[
\frac1{|\Iso (i_1,\dots,i_k)|}
\sum_{\si\in S_k}\zt(2i_{\si(1)},2i_{\si(2)},\dots,2i_{\si(k)}),
\]
where $S_k$ is the symmetric group on $k$ letters and 
$\Iso (i_1,\dots,i_k)$ is the subgroup of $S_k$ that fixes
$(i_1,\dots,i_k)$ under the obvious action.
\par
Now let
$N_{n,k}$ be the sum of all the monomial symmetric functions 
corresponding to partitions of $n$ having length $k$.
Of course $N_{n,k}=0$ unless $k\le n$, and $N_{k,k}=e_k$.
Then $\Zt$ sends $N_{n,k}$ to $E(2n,k)$.
Also, if we define (as in the introduction)
\[
\F(t,s)=1+\sum_{n\ge k\ge 1}N_{n,k}t^ns^k ,
\]
then $\Zt$ sends $\F(t,s)$ to the generating function $F(t,s)$.
We have the following simple description of $\F(t,s)$.
\begin{lem}
\label{infprod}
$\F(t,s)=E((s-1)t)H(t)$.
\end{lem}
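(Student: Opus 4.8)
The plan is to compute $E((s-1)t)H(t)$ directly from the product formulas for $E$ and $H$ and to recognize the result as the monomial expansion defining $\F(t,s)$. First I would write
\[
E((s-1)t)H(t)=\prod_{i\ge1}\bigl(1+(s-1)tx_i\bigr)\cdot\prod_{i\ge1}\frac1{1-tx_i}
=\prod_{i\ge1}\frac{1+(s-1)tx_i}{1-tx_i},
\]
and then simplify each factor by splitting $1+(s-1)tx_i=(1-tx_i)+stx_i$, which gives
\[
\frac{1+(s-1)tx_i}{1-tx_i}=1+\frac{stx_i}{1-tx_i}=1+s\sum_{m\ge1}t^mx_i^m.
\]

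Next I would expand the infinite product $\prod_{i\ge1}\bigl(1+s\sum_{m\ge1}t^mx_i^m\bigr)$ as a formal power series in $t$ and $s$ with coefficients in $\Sym$ (this is legitimate since each coefficient of $t^ns^k$ is a sum of monomials of bounded degree $n$). In this expansion each factor contributes either $1$ or a single term $st^mx_i^m$ with $m\ge1$, so a generic term is indexed by a choice of finitely many indices $i_1<\cdots<i_k$ together with exponents $a_1,\dots,a_k\ge1$, contributing $s^kt^{a_1+\cdots+a_k}x_{i_1}^{a_1}\cdots x_{i_k}^{a_k}$. Hence
\[
E((s-1)t)H(t)=\sum_{k\ge0}s^k\sum_{\substack{i_1<\cdots<i_k\\ a_1,\dots,a_k\ge1}}
t^{a_1+\cdots+a_k}\,x_{i_1}^{a_1}\cdots x_{i_k}^{a_k}.
\]

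Finally I would group the inner sum according to the partition $\lambda$ obtained by sorting $(a_1,\dots,a_k)$ into weakly decreasing order. Because $i_1<\cdots<i_k$ is a fixed ordering of the support, each monomial of degree $n$ whose multiset of nonzero exponents is a partition of $n$ of length exactly $k$ is produced exactly once; therefore the part of the inner sum of total degree $n$ equals $\sum_{|\lambda|=n,\ \ell(\lambda)=k}m_\lambda=N_{n,k}$. Collecting, the right-hand side becomes $1+\sum_{n\ge k\ge1}N_{n,k}t^ns^k=\F(t,s)$, which is the assertion. The only point needing a little care is this last bookkeeping step — checking that each relevant monomial occurs with multiplicity exactly one when the product is expanded — but once the factors are put in the form $1+s\sum_{m\ge1}t^mx_i^m$ this is routine, and the rest is a one-line computation.
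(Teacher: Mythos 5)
Your argument is correct and is essentially the paper's own proof run in the opposite direction: the paper factors $\F(t,s)$ as $\prod_i(1+stx_i+st^2x_i^2+\cdots)=\prod_i\frac{1+(s-1)tx_i}{1-tx_i}$, which is exactly your identity $\frac{1+(s-1)tx_i}{1-tx_i}=1+s\sum_{m\ge1}t^mx_i^m$ read right to left. The only difference is that you spell out the monomial bookkeeping that the paper dismisses with ``evidently.''
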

\begin{proof}
Evidently $\F(t,s)$ has the formal factorization
\[
\prod_{i=1}^{\infty}(1+stx_i+st^2x_i^2+\cdots)
=\prod_{i=1}^{\infty}\frac{1+(s-1)tx_i}{1-tx_i}
=E((s-1)t)H(t) .
\]
\end{proof}
\begin{proof}[Proof of Theorem \ref{gfun}]
Using the well-known formula for $\zt(2,2,\dots,2)$ \cite[Cor. 2.3]{H1},
\begin{equation}
\label{base}
\Zt(e_n)=\zt(\underbrace{2,2,\dots,2}_n)=\frac{\pi^{2n}}{(2n+1)!} .
\end{equation}
Hence
\[
\Zt(E(t))=\frac{\sinh(\pi\sqrt t)}{\pi\sqrt t} ,
\]
and the image of $H(t)=E(-t)^{-1}$ is
\[
\Zt(H(t))=
\frac{\pi\sqrt{-t}}{\sinh(\pi\sqrt{-t})}=
\frac{\pi\sqrt{t}}{\sin(\pi\sqrt{t})} .
\]
Thus from Lemma \ref{infprod} $F(t,s)=\Zt(\F(t,s))$ is
\[
\Zt(E((s-1)t)H(t))=\frac{\sinh(\pi\sqrt{(s-1)t})}{\pi\sqrt{(s-1)t}}
\frac{\pi\sqrt{t}}{\sin(\pi\sqrt t)}=
\frac{\sin(\pi\sqrt{(1-s)t})}{\sqrt{1-s}\sin(\pi\sqrt t)} .
\]
\end{proof}
Taking limits as $s\to 1$ in Theorem \ref{gfun}, we obtain
\[
F(t,1)=\frac{\pi\sqrt t}{\sin\pi\sqrt t}
\]
and so, taking the coefficient of $t^n$, the following result.
\begin{cor}
For all $n\ge 1$,
\label{edge}
\[
\sum_{k=1}^n E(2n,k)
=\frac{2(2^{2n-1}-1)(-1)^{n-1}B_{2n}\pi^{2n}}{(2n)!}=2(1-2^{1-2n})\zt(2n) .
\]
\end{cor}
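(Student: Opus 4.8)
The plan is to follow the route indicated right after Theorem~\ref{gfun}: evaluate $F(t,1)$ and read off the coefficient of $t^n$. First I would observe that, since $F(t,s)=1+\sum_{n\ge k\ge 1}E(2n,k)t^ns^k$, setting $s=1$ gives
\[
F(t,1)=1+\sum_{n\ge 1}\Bigl(\sum_{k=1}^n E(2n,k)\Bigr)t^n ,
\]
so that $\sum_{k=1}^n E(2n,k)$ is precisely the coefficient of $t^n$ in $F(t,1)$. This substitution is legitimate because, expanding the right-hand side of Theorem~\ref{gfun} in powers of $t$, one gets
\[
F(t,s)=\Bigl(\sum_{j\ge 0}\frac{(-1)^j\pi^{2j}(1-s)^j}{(2j+1)!}t^j\Bigr)\Big/\Bigl(\sum_{i\ge 0}\frac{(-1)^i\pi^{2i}}{(2i+1)!}t^i\Bigr),
\]
a power series in $t$ whose coefficients are polynomials in $s$, so that evaluation at $s=1$ is unambiguous.

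Next I would compute $F(t,1)$ directly from Theorem~\ref{gfun}. As $s\to 1$ we have $\sqrt{1-s}\to 0$ and $\sin(\pi\sqrt{1-s}\sqrt t)=\pi\sqrt{1-s}\sqrt t+O((1-s)^{3/2})$, hence $\sin(\pi\sqrt{1-s}\sqrt t)/\sqrt{1-s}\to \pi\sqrt t$, and therefore
\[
F(t,1)=\frac{\pi\sqrt t}{\sin(\pi\sqrt t)} .
\]

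Finally I would extract the coefficient of $t^n$ by the classical cosecant expansion
\[
\frac{z}{\sin z}=\sum_{n\ge 0}\frac{(-1)^{n-1}\,2\,(2^{2n-1}-1)B_{2n}}{(2n)!}\,z^{2n}
\]
(equivalently with $2^{2n}-2$ in place of $2(2^{2n-1}-1)$). Substituting $z=\pi\sqrt t$, so that $z^{2n}=\pi^{2n}t^n$, yields
\[
\frac{\pi\sqrt t}{\sin(\pi\sqrt t)}=\sum_{n\ge 0}\frac{2(2^{2n-1}-1)(-1)^{n-1}B_{2n}\pi^{2n}}{(2n)!}\,t^n ,
\]
and comparing the coefficient of $t^n$ for $n\ge 1$ with the formula for $F(t,1)$ above gives the Corollary. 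There is essentially no obstacle: the only ingredient that is not a one-line check is the cosecant series, which is standard (and could be cited or derived from the product for $\sin$); everything else is the elementary limit $s\to 1$ and a routine comparison of coefficients.
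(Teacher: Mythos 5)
Your proposal is correct and follows exactly the paper's route: set $s=1$ in Theorem \ref{gfun} to get $F(t,1)=\pi\sqrt t/\sin(\pi\sqrt t)$ and extract the coefficient of $t^n$ via the standard cosecant expansion. The only difference is that you spell out the justification of the limit $s\to1$ and the Bernoulli-number series explicitly, which the paper leaves implicit.
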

\par\noindent
{\bf Remark.} This result was obtained previously by Aoki, Kombu and 
Ohno \cite{AKO}, who stated it in the language of multiple zeta-star
values; since
\begin{equation}
\label{st2}
\zt^{\star}(\underbrace{2,\dots,2}_n)=E(2n,n)+E(2n,n-1)+\dots+E(2n,1),
\end{equation}
Corollary \ref{edge} can be recognized as \cite[equation (4.6)]{AKO}.
In fact, using Euler's infinite product for sine, one sees that
\[
F(z^2,1)=\prod_{m=1}^\infty\left(1-\frac{z^2}{m^2}\right)^{-1} 
=1+\sum_{n\ge 1}\zt^{\star}(\underbrace{2,\dots,2}_n)z^{2n} .
\]
\par
Recall that we defined $E^{\star}(2n,k)$ as the sum of all multiple
zeta-star values with even arguments having depth $k$ and weight $2n$.
The left-hand side of equation (\ref{st2}) is $E^{\star}(2n,n)$, and
that equation generalizes as follows.
\begin{lem}
\label{Est}
For $n\ge k\ge 1$,
\[
E^{\star}(2n,k)=\sum_{j=1}^k \binom{n-j}{k-j}E(2n,j) .
\]
\end{lem}
\begin{proof}
Let $I=(i_1,\dots,i_k)$ be a composition (i.e., ordered partition) of $n$.  
We can think of $I$ as specified by placing $k-1$ dividers within a row 
of $n$ dots, which makes it clear that there are $\binom{n-1}{k-1}$ 
compositions of $n$ with $k$ parts.
If we associate to $I$ the multiple zeta and multiple star zeta values
\[
\zt(2I)=\zt(2i_1,\dots,2i_k),\quad
\zt^{\star}(2I)=\zt^{\star}(2i_1,\dots,2i_k),
\]
of even values, then $\zt^{\star}(2I)=\sum_{I\succeq J}\zt(2J)$, where
$\succeq$ is the partial order on compositions given by refinement,
i.e., $I\succeq J$ if $J$ can be obtained by combining adjacent parts of $I$;
in terms of the dividers-in-row-of-dots picture, $J$ is obtained by 
removing some dividers from $I$.
\par
Now $E^{\star}(2n,k)$ is the sum of all $\zt^{\star}(2I)$ with $I$ having
$k$ parts.  Write each of these as a sum of multiple zeta values.  Then
the coefficient of $\zt(2J)$, where $J$ has $j\le k$ parts, is the number
of distinct compositions $I$ with $k$ parts such that $J\preceq I$;
this corresponds to the number of ways to insert $k-j$ additional
dividers into $J$.  Since there are $n-1-(j-1)=n-j$ places to put them,
this number is $\binom{n-j}{k-j}$.
\end{proof}
\begin{proof}[Proof of Corollary \ref{Fst}]
Using Lemma \ref{Est} and Theorem \ref{gfun},
\begin{multline*}
F^{\star}(t,s)=1+\sum_{n\ge k\ge 1}E^{\star}(2n,k)t^ns^k
=1+\sum_{n\ge k\ge 1}\sum_{j=1}^k\binom{n-j}{k-j}E(2n,j)t^ns^k\\
=1+\sum_{n\ge j\ge 1}E(2n,j)\sum_{i=0}^{n-j}\binom{n-j}{i}t^ns^{j+i}
=1+\sum_{n\ge j\ge 1}E(2n,j)t^ns^j(1+s)^{n-j}\\
=F\left(t(1+s),\frac{s}{1+s}\right)
=\frac{\sin\left(\pi\sqrt{1-\frac{s}{1+s}}\sqrt{t(1+s)}\right)}
{\sqrt{1-\frac{s}{1+s}}\sin(\pi\sqrt{t(1+s)})}
=\frac{\sqrt{1+s}\sin(\pi\sqrt{t})}{\sin(\pi\sqrt{(1+s)t})}\\
=\frac1{F(t,-s)} .
\end{multline*}
\end{proof}
\par
Another consequence of Lemma \ref{infprod} is the following
partial differential equation.
\begin{prop}
\label{pde}
\[
t\pD{\F}{t}(t,s)+(1-s)\pD{\F}{s}(t,s)=tP(t)\F(t,s) .
\]
\end{prop}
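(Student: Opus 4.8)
The plan is to derive the partial differential equation directly from the product formula of Lemma~\ref{infprod}, namely $\F(t,s)=E((s-1)t)H(t)$, by a chain-rule computation carried out in the formal power series ring $\Sym[[t,s]]$. The crucial observation is that the first-order differential operator $\mathcal L = t\,\partial_t + (1-s)\,\partial_s$ annihilates the quantity $(s-1)t$, since $\mathcal L\big((s-1)t\big) = t(s-1) + (1-s)t = 0$. Consequently, when $\mathcal L$ is applied to the composite $E((s-1)t)$, the contribution coming through the argument of $E$ drops out entirely, and only the factor $H(t)$ actually gets differentiated.

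Concretely, I would write $\pD{\F}{t} = (s-1)E'((s-1)t)H(t) + E((s-1)t)H'(t)$ and $\pD{\F}{s} = t\,E'((s-1)t)H(t)$, then form $t\pD{\F}{t} + (1-s)\pD{\F}{s}$. The two terms involving $E'((s-1)t)$ are $t(s-1)E'((s-1)t)H(t)$ and $(1-s)t\,E'((s-1)t)H(t)$, and they cancel; what remains is $t\,E((s-1)t)H'(t)$.

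To finish, I use the relation $P(t) = H'(t)/H(t)$ recorded in the generating-function list of \S2, i.e.\ $H'(t) = P(t)H(t)$. Substituting gives $t\pD{\F}{t} + (1-s)\pD{\F}{s} = t\,P(t)\,E((s-1)t)H(t) = t\,P(t)\,\F(t,s)$, which is the asserted identity. There is essentially no obstacle here: the only points requiring a moment's care are the sign bookkeeping in the cancellation of the $E'$-terms and the remark that term-by-term differentiation with respect to $t$ and $s$ is legitimate for elements of $\Sym[[t,s]]$ (each $N_{n,k}$ being a genuine element of $\Sym$, and the double series being formal in $t,s$). One could alternatively translate the claimed PDE into the recurrence it encodes for the coefficients $N_{n,k}$ and verify that combinatorially, but the computation above via Lemma~\ref{infprod} is shorter and is the route I would take.
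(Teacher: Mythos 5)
Your proposal is correct and follows exactly the paper's own argument: the paper likewise computes $\pD{\F}{t}$ and $\pD{\F}{s}$ from Lemma \ref{infprod}, and the cancellation of the $E'((s-1)t)$ terms together with $H'(t)=P(t)H(t)$ is precisely the step it leaves to the reader with ``from which the conclusion follows.'' You have simply made that final bookkeeping explicit.
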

\begin{proof}
From Lemma \ref{infprod} we have
\begin{align*}
\pD{\F}{t}(t,s)&=(s-1)E'((s-1)t)H(t)+E((s-1)t)H'(t)\\
\pD{\F}{s}(t,s)&=tE'((s-1)t)H(t)
\end{align*}
from which the conclusion follows.
\end{proof}
\par
Now examine the coefficient of $t^ns^k$ in Proposition
\ref{pde} to get the following.
\begin{prop} For $n\ge k+1$,
\label{sfi}
\[
p_1N_{n-1,k}+p_2N_{n-2,k}+\dots+p_{n-k}N_{k,k}=(n-k)N_{n,k}+(k+1)N_{n,k+1} .
\]
\end{prop}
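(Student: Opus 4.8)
The plan is to read off the coefficient of $t^ns^k$ from both sides of the partial differential equation in Proposition \ref{pde}, using the expansion $\F(t,s)=1+\sum_{n\ge k\ge 1}N_{n,k}t^ns^k$ and the definitions $P(t)=\sum_{j\ge 1}p_jt^{j-1}$, so that $tP(t)=\sum_{j\ge 1}p_jt^j$.

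First I would expand the left-hand side term by term. Differentiating gives $t\pD{\F}{t}(t,s)=\sum_{n\ge k\ge 1}nN_{n,k}t^ns^k$ and $\pD{\F}{s}(t,s)=\sum_{n\ge k\ge 1}kN_{n,k}t^ns^{k-1}$, so that $(1-s)\pD{\F}{s}(t,s)=\sum_{n\ge k\ge 1}kN_{n,k}t^ns^{k-1}-\sum_{n\ge k\ge 1}kN_{n,k}t^ns^k$. Reindexing $k\mapsto k+1$ in the first of these two sums, the coefficient of $t^ns^k$ in the whole left-hand side (for $k\ge1$) equals $nN_{n,k}+(k+1)N_{n,k+1}-kN_{n,k}=(n-k)N_{n,k}+(k+1)N_{n,k+1}$.

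Next, on the right-hand side, since $tP(t)$ carries no power of $s$, the factor $s^k$ must come entirely from $\F$; hence the coefficient of $t^ns^k$ in $tP(t)\F(t,s)$ is $\sum_{j\ge1}p_jN_{n-j,k}$. Because $N_{m,k}$ vanishes unless $m\ge k$, this sum terminates at $j=n-k$, giving exactly $p_1N_{n-1,k}+p_2N_{n-2,k}+\dots+p_{n-k}N_{k,k}$ when $n\ge k+1$. Equating the two coefficient expressions yields the asserted identity.

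The argument is purely a matter of bookkeeping; the only steps needing care are the index shift $k\mapsto k+1$ that produces the $(k+1)N_{n,k+1}$ term and the truncation of the product on the right using $N_{m,k}=0$ for $m<k$. No genuine obstacle arises; one may also note that the excluded case $n=k$ is consistent, since then the left side of the proposition is an empty sum and the right side is $(k+1)N_{k,k+1}=0$.
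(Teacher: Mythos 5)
Your proof is correct and is precisely the argument the paper intends: the paper's own ``proof'' consists of the single instruction to examine the coefficient of $t^ns^k$ in the partial differential equation of Proposition \ref{pde}, and your coefficient extraction (including the index shift producing $(k+1)N_{n,k+1}$ and the truncation via $N_{m,k}=0$ for $m<k$) carries that out correctly. The paper does remark that a direct combinatorial counting argument is also possible, but the generating-function route you chose is the one it actually uses.
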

\par\noindent
It is also possible to prove this result directly via a 
counting argument like that used to prove the lemma of
\cite[p. 16]{H3}.
\par
The preceding result allows us to write $N_{n,k}$
explicitly in terms of complete and elementary symmetric 
functions as follows.
\begin{lem}
\label{Nexp}
For $r\ge 0$,
\[
N_{k+r,k}=\sum_{i=0}^r (-1)^i\binom{k+i}{i}h_{r-i}e_{k+i} .
\]
\end{lem}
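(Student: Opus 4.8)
The plan is to extract the claimed identity directly from the recurrence in Proposition \ref{sfi}, treating $k$ as fixed and inducting on $r$. Write $N_{k+r,k} = \sum_{i=0}^r c_{r,i}\, h_{r-i} e_{k+i}$ with unknown scalar coefficients $c_{r,i}$; the base case $r=0$ is $N_{k,k}=e_k$, giving $c_{0,0}=1$, which matches $(-1)^0\binom{k}{0}=1$. The goal is to show the recurrence forces $c_{r,i} = (-1)^i\binom{k+i}{i}$.

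First I would rewrite Proposition \ref{sfi} in a form adapted to the $h$-$e$ basis. The left-hand side is $\sum_{j=1}^{r+1} p_j N_{k+r+1-j,k}$ (reindexing so that the top index on the right is $k+r+1$), and the right-hand side is $(r+1)N_{k+r+1,k} + (k+1)N_{k+r+1,k+1}$. The key classical tool is Newton's identity relating power sums to complete homogeneous symmetric functions, namely $p_j = \sum (\cdots) h$'s — more usefully, $\sum_{j\ge 1} p_j t^{j-1} = H'(t)/H(t)$, i.e.\ $H'(t) = P(t)H(t)$, which in coefficients reads $(m+1)h_{m+1} = \sum_{j=1}^{m+1} p_j h_{m+1-j}$. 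This lets me collapse the convolution $\sum_j p_j h_{r-i-\text{something}}$ that appears when I substitute the inductive formula for each $N_{k+r+1-j,k}$ into the left side.

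The main computation is therefore: substitute $N_{k+r+1-j,k} = \sum_{i} (-1)^i\binom{k+i}{i} h_{r+1-j-i} e_{k+i}$ (valid for the smaller indices by induction) into $\sum_{j=1}^{r+1} p_j N_{k+r+1-j,k}$, interchange the order of summation to group by the fixed factor $e_{k+i}$, and use the Newton-type identity above to evaluate the inner sum $\sum_j p_j h_{r+1-j-i}$ as $(r+1-i)h_{r+1-i}$ plus boundary contributions from the cases where the $h$-index would be negative (these boundary terms are exactly what produce the $e_{k+i+1}$ pieces, via $p_j e_{k+i}$-type relations, and ultimately feed the $N_{k+r+1,k+1}$ term on the right). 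I then match the coefficient of each $h_{r+1-i}e_{k+i}$ on both sides and check that the proposed closed form $c_{r+1,i}=(-1)^i\binom{k+i}{i}$ satisfies the resulting linear relation, using the Pascal-type recurrence $\binom{k+i}{i} = \binom{k+i-1}{i-1} + \binom{k+i-1}{i}$.

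The step I expect to be the genuine obstacle is the careful bookkeeping of boundary terms: when $r+1-j-i<0$ the term $h_{r+1-j-i}$ vanishes, but the truncation of the convolution means the naive application of $H'=PH$ is off by precisely the missing low-order terms, and these must be reorganized — using $N_{k+r+1,k+1}=\sum_i(-1)^i\binom{k+1+i}{i}h_{r-i}e_{k+1+i}$ from the inductive hypothesis at depth $k+1$ — to land on the right-hand side of Proposition \ref{sfi}. An alternative, possibly cleaner, route that avoids this entirely is to verify the formula at the level of generating functions: set $g_k(t)=\sum_{r\ge 0}N_{k+r,k}t^r$ and show from Lemma \ref{infprod} that $\sum_k g_k(t)s^k = \F(t,st)/(\text{suitable factor})$ has the product form $E(-t)^{?}H(t)\cdots$; then the claimed formula for $N_{k+r,k}$ is just the statement that $g_k(t) = e_k \cdot H(t) \cdot (\text{the series }\sum_i(-1)^i\binom{k+i}{i}t^i\text{ divided out})$, and since $\sum_i(-1)^i\binom{k+i}{i}t^i = (1+t)^{-(k+1)} \cdot(\text{no --- it is }(1+t)^{-k-1}$ only formally$)$, one recognizes the generating identity $\sum_{i\ge0}(-1)^i\binom{k+i}{i}t^i = (1+t)^{-k-1}$ and reduces everything to the single algebraic identity $E((s-1)t)H(t)$ expanded in the $h_{r-i}e_{k+i}$ basis. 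I would present the inductive proof as the main argument and mention the generating-function shortcut as a remark.
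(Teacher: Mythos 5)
Your main argument is essentially the paper's own proof: induction on $r$, substitution of the inductive formula into Proposition \ref{sfi}, collapsing the convolution with Newton's identity $\sum_{j=1}^{m}p_jh_{m-j}=mh_m$, and matching coefficients of $h_{r+1-i}e_{k+i}$. The plan is sound, but two details in it are off. First, the ``boundary terms'' you single out as the genuine obstacle do not exist: with the convention $h_m=0$ for $m<0$, the inner sum $\sum_{j=1}^{r+1}p_jh_{r+1-i-j}$ \emph{is} the full Newton convolution for $m=r+1-i$ and equals $(r+1-i)h_{r+1-i}$ exactly, with no correction; the $e_{k+i+1}$ pieces do not come from any $p_je_{k+i}$-type relations but from expanding the term $(k+1)N_{k+r+1,k+1}$ that is already present on the right-hand side of Proposition \ref{sfi}, using the inductive hypothesis with $k$ replaced by $k+1$ (so the induction must be stated uniformly in $k$, not ``with $k$ fixed''). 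Second, the identity that closes the coefficient match is not Pascal's rule but $(k+1)\binom{k+j}{j-1}=j\binom{k+j}{j}$, which turns $(r+1-j)\binom{k+j}{j}+(k+1)\binom{k+j}{j-1}$ into $(r+1)\binom{k+j}{j}$. Finally, your generating-function ``shortcut,'' done cleanly, is in fact shorter than the inductive proof and needs no binomial series at all: by Lemma \ref{infprod}, $N_{k+r,k}$ is the coefficient of $t^{k+r}s^k$ in $E((s-1)t)H(t)=\sum_{j,m}e_jh_m(s-1)^jt^{j+m}$, and extracting $s^k$ from $(s-1)^j$ gives $\sum_{i=0}^{r}\binom{k+i}{k}(-1)^ie_{k+i}h_{r-i}$ directly, which is the lemma.
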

\begin{proof}
We use induction on $r$, the result being evident for $r=0$.
Proposition \ref{sfi} gives
\[
\sum_{i=1}^{r+1}p_iN_{k+r+1-i,k}=(r+1)N_{k+r+1,k}+(k+1)N_{k+r+1,k+1} ,
\]
which after application of the induction hypothesis becomes
\begin{multline*}
\sum_{i=1}^{r+1}\sum_{j=0}^{r+1-j}(-1)^jp_i\binom{k+j}{j}h_{r+1-i-j}N_{k+j,k+j}=\\
(r+1)N_{k+r+1,k}+(k+1)\sum_{j=0}^r\binom{k+1+j}{j}h_{r-j}N_{k+1+j,k+1+j} .
\end{multline*}
The latter equation can be rewritten
\begin{multline*}
\sum_{j=0}^r(-1)^j\binom{k+j}{j}N_{k+j,k+j}\sum_{i=1}^{r+1-j}p_ih_{r+1-i-j}=\\
(r+1)N_{k+r+1,k}-(k+1)\sum_{j=1}^{r+1}(-1)^j\binom{k+j}{j-1}h_{r+1-j}N_{k+j,k+j} .
\end{multline*}
Now the inner sum on the left-hand side is $(r+1-j)h_{r+1-j}$ by
the recurrence relating the complete and power-sum symmetric functions,
so we have
\begin{multline*}
(r+1)N_{k+r+1,k} - (r+1)N_{k,k}h_{r+1}=\\
\sum_{j=1}^{r+1}(-1)^jh_{r+1-j}N_{k+j,k+j}
\left((r+1-j)\binom{k+j}{j}+(k+1)\binom{k+j}{j-1}\right) ,
\end{multline*}
and the conclusion follows after the observation that
$(k+1)\binom{k+j}{j-1}=j\binom{k+j}{j}$.
\end{proof}
\begin{proof}[Proof of Theorem \ref{explicit}]
Rewrite Lemma \ref{Nexp} in the form
\[
N_{n,k}=\sum_{i=0}^{n-k}\binom{n-i}{k}(-1)^{n-k-i}h_ie_{n-i} 
\]
and apply the homomorphism $\Zt$, using equation (\ref{base}) and
\[
\Zt(h_i)=\frac{2(2^{2i-1}-1)(-1)^{i-1}B_{2i}\pi^{2i}}{(2i)!} .
\]
\end{proof}
\section{Proof of Theorems \ref{ztcon} and \ref{bercon}}
From the introduction we recall the statement of Theorem \ref{ztcon}:
\begin{multline*}
E(2n,k)=\frac1{2^{2(k-1)}}\binom{2k-1}{k}\zt(2n)\\
-\sum_{j=1}^{\lfloor \frac{k-1}2\rfloor}
\frac1{2^{2k-3}(2j+1)B_{2j}}\binom{2k-2j-1}{k}\zt(2j)\zt(2n-2j) .
\end{multline*}
We note that Euler's formula (\ref{euler})
can be used to write the result in the alternative form
\begin{equation}
\label{altcon}
E(2n,k)=\sum_{j=0}^{\lfloor\frac{k-1}2\rfloor}
\frac{(-1)^j\pi^{2j}\zt(2n-2j)}{2^{2k-2j-2}(2j+1)!}\binom{2k-2j-1}{k}
\end{equation}
which avoids mention of Bernoulli numbers.
\par
We now expand out the generating function $F(t,s)$.  We have
\begin{multline*}
F(t,s)=\frac1{\sqrt{1-s}\sin\pi\sqrt t}\sin(\pi\sqrt t\sqrt{1-s})\\
=\frac{\pi\sqrt t}{\sin\pi\sqrt t}\sum_{j=0}^\infty
\frac{(-1)^j\pi^{2j}t^j(1-s)^j}{(2j+1)!}
=\sum_{k=0}^\infty s^kG_k(t),
\end{multline*}
where
\begin{equation}
\label{Gdef}
G_k(t)=(-1)^k\frac{\pi\sqrt t}{\sin\pi\sqrt t}
\sum_{j\ge k}\frac{(-1)^j\pi^{2j}t^j}{(2j+1)!}\binom{j}{k} .
\end{equation}
Then Theorem \ref{ztcon} is equivalent to the statement that
\[
G_k(t)=\sum_{n\ge k}t^n\sum_{j=0}^{\lfloor\frac{k-1}2\rfloor}
\frac{(-1)^j\pi^{2j}\zt(2n-2j)}{2^{2k-2j-2}(2j+1)!}\binom{2k-2j-1}{k} 
\]
for all $k$.
We can write the latter sum as
\begin{multline}
\label{hard}
\sum_{j=0}^{\lfloor\frac{k-1}2\rfloor}
\frac{(-4\pi^2t)^j}{2^{2k-2}(2j+1)!}\binom{2k-2j-1}{k}
\sum_{n\ge j+1}\zt(2n-2j)t^{n-j}-\\
\sum_{j=0}^{\lfloor\frac{k-1}2\rfloor}
\frac{(-4\pi^2t)^j}{2^{2k-2}(2j+1)!}\binom{2k-2j-1}{k}
\sum_{n=j+1}^{k-1}\zt(2n-2j)t^{n-j}=\\
\frac12(1-\pi\sqrt t\cot\pi\sqrt t)
\sum_{j=0}^{\lfloor\frac{k-1}2\rfloor}
\frac{(-4\pi^2t)^j}{2^{2k-2}(2j+1)!}\binom{2k-2j-1}{k}-\\
\sum_{j=0}^{\lfloor\frac{k-1}2\rfloor}
\frac{(-4\pi^2t)^j}{2^{2k-2}(2j+1)!}\binom{2k-2j-1}{k}
\sum_{n=j+1}^{k-1}\zt(2n-2j)t^{n-j} ,
\end{multline}
where we have used the generating function
\[
\frac12(1-\pi\sqrt t\cot \pi\sqrt t)=\sum_{i=1}^\infty \zt(2i)t^i .
\]
Note that the last sum in (\ref{hard}) is a polynomial that cancels
exactly those terms in 
\begin{equation}
\label{expr}
\frac12(1-\pi\sqrt t\cot\pi\sqrt t)
\sum_{j=0}^{\lfloor\frac{k-1}2\rfloor}
\frac{(-4\pi^2t)^j}{2^{2k-2}(2j+1)!}\binom{2k-2j-1}{k}
\end{equation}
of degree less than $k$.  Thus, to prove Theorem \ref{ztcon} it
suffices to show that
\[
G_k(t)=\text{terms of degree $\ge k$ in expression (\ref{expr}).}
\]
\par
From equation (\ref{Gdef}) it is evident that
\begin{equation}
\label{differ}
G_k(t)=\frac{\pi\sqrt t}{\sin\pi\sqrt t}\cdot\frac{(-t)^k}{k!}\cdot
\frac{d^k}{dt^k}\left(\frac{\sin\pi\sqrt t}{\pi\sqrt t}\right).
\end{equation}
We use this to obtain an explicit formula for $G_k(t)$.
\begin{lem}
\label{Gfun}
For $k\ge 0$,
\[
G_k(t)=P_k(\pi^2t)\pi\sqrt t\cot\pi\sqrt t+Q_k(\pi^2t),
\]
where $P_k,Q_k$ are polynomials defined by
\begin{align*}
P_k(x)=&-\sum_{j=0}^{\lfloor\frac{k-1}2\rfloor}\frac{(-4x)^j}{2^{2k-1}(2j+1)!}
\binom{2k-2j-1}{k} \\
Q_k(x)=&\sum_{j=0}^{\lfloor\frac{k}2\rfloor}\frac{(-4x)^j}{2^{2k}(2j)!} 
\binom{2k-2j}{k} .
\end{align*}
\end{lem}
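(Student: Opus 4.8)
The plan is to establish Lemma \ref{Gfun} by induction on $k$, using the differentiation formula (\ref{differ}) as the starting point and then extracting a recurrence for the pairs $(P_k, Q_k)$.

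First I would verify the base case $k=0$ directly: here (\ref{differ}) gives $G_0(t) = 1$, and the stated formulas yield $P_0(x) = -\tfrac12\binom{-1}{0}\cdot\frac{1}{2^{-1}}$ — so some care with the convention $\binom{-1}{0}$ is needed — while $Q_0(x) = \binom{0}{0} = 1$, which should be checked to give $G_0(t)=1$. (If the $k=0$ case is awkward, I would instead take $k=1$ as the base: from (\ref{differ}), $G_1(t) = -t\frac{d}{dt}\!\left(\frac{\sin\pi\sqrt t}{\pi\sqrt t}\right)\cdot\frac{\pi\sqrt t}{\sin\pi\sqrt t}$, which one computes to be $\tfrac12 - \tfrac12\pi\sqrt t\cot\pi\sqrt t$ plus lower-order pieces, matching $P_1(x) = -\tfrac12$, $Q_1(x) = \tfrac12$.)

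The inductive step is the heart of the argument. Write $u = \pi\sqrt t$ and $y = u\cot u$; the key analytic fact is that $\frac{d}{dt}(u\cot u)$ can be expressed as a rational expression in $u\cot u$ and $t$ — indeed $t\frac{d}{dt}(u\cot u) = \tfrac12(u\cot u - (u\cot u)^2 - u^2)$, so the space spanned by $1$ and $u\cot u$ over the polynomial ring in $\pi^2 t$ is "almost" closed under $t\frac{d}{dt}$, the obstruction being the appearance of $(u\cot u)^2$. From (\ref{differ}) one has the relation $G_{k+1}(t) = \frac{-t}{k+1}\cdot\frac{\pi\sqrt t}{\sin\pi\sqrt t}\cdot\frac{d}{dt}\!\left(\frac{\sin\pi\sqrt t}{\pi\sqrt t}\,G_k(t)\cdot\frac{\sin\pi\sqrt t}{\pi\sqrt t}\cdot\frac{\pi\sqrt t}{\sin\pi\sqrt t}\right)$ — more cleanly, since $\frac{\sin u}{u}\,G_k(t)$ is what gets differentiated, I would set $g_k = \frac{\sin u}{u}\,G_k$ and note $g_{k+1} = \frac{-t}{k+1}\frac{dg_k}{dt}$ with $g_0 = \frac{\sin u}{u}$ (from (\ref{base}) and $\Zt(E(t))$). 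Substituting $g_k = \frac{\sin u}{u}(P_k(\pi^2 t)\,y + Q_k(\pi^2 t))$ and differentiating, using $\frac{d}{dt}\frac{\sin u}{u} = \frac{\sin u}{u}\cdot\frac{y-1}{2t}$ together with the formula for $t\frac{dy}{dt}$ above, the $(u\cot u)^2 = y^2$ terms must cancel against the $\frac{y-1}{2t}$ factor — this cancellation is exactly what forces the recurrence to be consistent and is the step I expect to be the main obstacle, since it requires the $P_k, Q_k$ to satisfy a coupled pair of recurrences that must then be checked to match the claimed closed forms.

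Once the recurrence $P_{k+1}, Q_{k+1}$ in terms of $P_k, Q_k$ (and their derivatives, i.e., shifts of coefficients) is in hand, the remaining task is routine: substitute the binomial-coefficient ansatz and verify the identity coefficient-by-coefficient, which reduces to a Pascal-type relation among the $\binom{2k-2j-1}{k}$ and $\binom{2k-2j}{k}$. I would carry out this verification for the coefficient of $(-4x)^j$ and confirm both the $P$-recurrence and the $Q$-recurrence separately; the index bookkeeping ($\lfloor\frac{k-1}{2}\rfloor$ versus $\lfloor\frac{k}{2}\rfloor$, and the parity-dependent boundary terms) is the only delicate point. As a sanity check at the end, note that $P_k$ here is consistent with (\ref{expr}): the coefficient $-\frac{(-4x)^j}{2^{2k-1}(2j+1)!}\binom{2k-2j-1}{k}$ matches, up to the factor $\frac12(1-\pi\sqrt t\cot\pi\sqrt t)$ being split off, the expression the proof of Theorem \ref{ztcon} needs, so Lemma \ref{Gfun} should slot directly into the argument already begun in \S3.
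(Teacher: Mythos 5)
Your overall strategy---induct on $k$ starting from equation~(\ref{differ}), derive a coupled recurrence for the pair $(P_k,Q_k)$, and verify the binomial closed forms against it---is exactly the paper's, and your observation that the $(u\cot u)^2$ terms cancel is correct: since $\frac{\sin u}{u}\cdot u\cot u=\cos u$, your ansatz $g_k=\frac{\sin u}{u}\bigl(P_k(\pi^2t)\,y+Q_k(\pi^2t)\bigr)$ is literally the paper's form $g_k=P_k(\pi^2t)\cos\pi\sqrt t+Q_k(\pi^2t)\frac{\sin\pi\sqrt t}{\pi\sqrt t}$, and that two-dimensional module over $\Q[\pi^2t]$ is genuinely closed under $d/dt$. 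The genuine error is in your step relation. Writing $f(t)=\sin\pi\sqrt t/\pi\sqrt t$, equation~(\ref{differ}) says $g_k:=fG_k=\frac{(-t)^k}{k!}f^{(k)}$, and differentiating \emph{this} gives
\[
(k+1)\,g_{k+1}=-t\,g_k'+k\,g_k,
\]
not $g_{k+1}=\frac{-t}{k+1}g_k'$ as you assert: the product rule applied to $(-t)^kf^{(k)}$ contributes the extra summand $\frac{k}{k+1}g_k$, which vanishes only at $k=0$. That term is not negligible---it is precisely the source of the $kP_k(x)$ summand and of the coefficient $\frac{2k+1}{2}$ (rather than $\frac12$) of $Q_k$ in the correct recurrence
\[
(k+1)P_{k+1}(x)=kP_k(x)-xP_k'(x)-\tfrac12Q_k(x),\qquad
(k+1)Q_{k+1}(x)=\tfrac{2k+1}2Q_k(x)-xQ_k'(x)+\tfrac{x}2P_k(x).
\]
With your version the induction already fails at $k=1\to2$: from $P_1=-\tfrac12$, $Q_1=\tfrac12$ it yields $P_2=-\tfrac18$, whereas the stated closed form gives $P_2(x)=-\tfrac38$. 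So the coefficient-by-coefficient verification you defer to the end would not go through. Once the relation is corrected, the rest of your outline matches the paper's proof; note also that the base case needs no convention for $\binom{-1}{0}$, since the sum defining $P_0$ has upper limit $\lfloor-\tfrac12\rfloor=-1$ and is therefore empty, giving $P_0=0$, $Q_0=1$ directly.
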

\begin{proof}
In view of equation (\ref{differ}), the conclusion is equivalent to
\[
f^{(k)}(t)=(-1)^kk!t^{-k}P_k(\pi^2t)\cos\pi\sqrt t + (-1)^kk!t^{-k}Q_k(\pi^2t)
f(t),
\]
where $f(t)=\sin\pi\sqrt t/\pi\sqrt t$.
Differentiating, one sees that the polynomials $P_k$ and 
$Q_k$ are determined by the recurrence
\begin{align*}
(k+1)P_{k+1}(x)&=kP_k(x)-xP_k'(x)-\frac12Q_k(x)\\
(k+1)Q_{k+1}(x)&=\frac{2k+1}2Q_k(x)-xQ_k'(x)+\frac{x}2P_k(x)
\end{align*}
together with the initial conditions $P_0(x)=0$, $Q_0(x)=1$.
The recurrence and initial conditions are satisfied by the explicit
formulas above.
\end{proof}
\begin{proof}[Proof of Theorem \ref{ztcon}]
Using Lemma \ref{Gfun}, we have
\begin{multline*}
G_k(t)=-\sum_{j=0}^{\lfloor \frac{k-1}2\rfloor}\frac{(-4\pi^2t)^j}
{2^{2k-1}(2j+1)!}\binom{2k-2j-1}{k}\pi\sqrt t\cot\pi\sqrt t\\
+\sum_{j=0}^{\lfloor \frac{k}2\rfloor}\frac{(-4\pi^2t)^j}{2^{2k}(2j)!}
\binom{2k-2j}{k}=\\
\frac12(1-\pi\sqrt t\cot\pi\sqrt t)
\sum_{j=0}^{\lfloor \frac{k-1}2\rfloor}\frac{(-4\pi^2t)^j}
{2^{2k-2}(2j+1)!}\binom{2k-2j-1}{k}\\
+\text{terms of degree $< k$} ,
\end{multline*}
and this completes the proof.
\end{proof}
\begin{proof}[Proof of Theorem \ref{explicit}]
Using Theorem \ref{ztcon} in the form of equation (\ref{altcon}), eliminate
$\zt(2n-2j)$ using Euler's formula (\ref{euler}) and then
compare with Theorem \ref{explicit} to get
\begin{multline*}
\sum_{j=0}^{\lfloor \frac{k-1}2\rfloor}\frac{(-1)^{n-1}\pi^{2n}B_{2n-2j}}
{2^{2k-2n-1}(2n-2j)!(2j+1)!}\binom{2k-2j-1}{k}=\\
\frac{(-1)^{n-k-1}\pi^{2n}}{(2n+1)!}\sum_{i=0}^{n-k}
\binom{n-i}{k}\binom{2n+1}{2i}2(2^{2i-1}-1)B_{2i} .
\end{multline*}
Now multiply both sides by $(-1)^{n-1}2^{2k-2n-1}\pi^{-2n}(2n+1)!$
and rewrite the factorials on the left-hand side as a binomial
coefficient.
\end{proof}

\end{document}